\documentclass[11pt]{amsart}
\usepackage{tikz}
\usepackage{color}

\begin{document}

\title[Distinguishing index]{A class of graphs with distinguishing index $\bf D' \leq 3$}
\author{Mariusz Grech, Andrzej Kisielewicz}
\address{Faculty of Pure and Applied Mathematics, Wrocław University of Science and Technology \\
Wybrzeże Wyspiańskiego Str. 27,
50-370 Wrocław, Poland}
\email{[mariusz.grech,andrzej.kisielewicz]@pwr.edu.pl}
\thanks{
{Supported in part by Polish NCN grant 2016/21/B/ST1/03079}}

\newtheorem{Theorem}{Theorem}
\newtheorem{Lemma}{Lemma}[section]

\newcommand{\ppp}{\colorbox{yellow}{???}} 

\newcommand{\rrr}{\colorbox{red}{===}}

\begin{abstract}

An edge-coloring of a graph is called asymmetric if the only automorphism which preserves it is the identity.  Lehner, Pil\'{s}niak, and Stawiski proved that all connected regular graphs except $K_2$ admit an asymmetric edge-coloring with three colors. We generalize this result for graphs whose minimal degree $\delta$ and the maximal degree $\Delta$ satisfy $\delta \geq \Delta/2$. 
\end{abstract}
 
\keywords{asymmetric coloring, edge-coloring,  distinguishing index, automorphism group of graph}

\maketitle

Let $G$ be a connected, finite or infinite, graph. 
An edge-coloring  $\phi : E(G) \to \{1,2,...,r\}$ of a graph $G$ is said to be \emph{asymmetric} if no nontrivial automorphism of $G$ preserves colors of he edges. The point is to destroy the symmetries of the graph, that is, to make the automorphism group of the colored graph trivial. Of course we are interested in the minimal number $r$ of colors for which an asymmetric coloring of $G$ exists. Such a number is called the \emph{distinguishing index} of $G$ and denoted by $D'(G)$. Note that, in this definition, $\phi$ above is an arbitrary function from $E(G)$ to $\{1,2,...,r\}$, with no assumption that adjacent vertices get different colors. The notion of the distinguishing index has been introduced in \cite{KP}. This was done in analogy to the concept of the distinguishing number, referring to coloring vertices rather than edges, which has been introduced and considered much earlier \cite{AC,ba}.

The very first result on $D'(G)$, obtained together with introducing the distinguishing index in \cite{KP}, was
that for connected graphs with finite maximum degree $\Delta$,  $D'(G) \leq \Delta$, unless $G$ is $C_3, C_4$ or $C_5$. This was improved in \cite{pi,PS} by characterizing graphs for which the equality holds. There are many classes of graphs for which much better bounds are possible. It is known that, apart from finitely many exceptions, $D'(G) \leq 2$ for all traceable graphs \cite{pi}, 3-connected planar graphs \cite{PT}, Cartesian powers of finite and countable graphs \cite{BP,FI},
and countable graphs where every non-trivial automorphism moves infinitely many edges \cite{le}.
In turn, for line graphs and claw-free graphs $D'(G) \leq 3$ \cite{AS}. A Nordhaus-Gaddum type inequality $D'(G)+D'(\overline{G}) \leq \Delta + 2$, under some natural conditions, has been proved in \cite{pi2}.
For connected graphs without pendant edges the bound has been improved to $\sqrt{\Delta} +1$ \cite{IKP}.
Let us mention also that in \cite{GK}, using an approach of permutation group theory, the equality $D'(G)=2$ has been proved for all graphs whose automorphism group is simple.
Finally, in \cite{LPS}, Lehner, Pil\'{s}niak, and Stawiski  proved that for regular graphs other than $K_2$, $D'(G)\leq 3$. We refer the reader to \cite{LPS} for more on motivation and other related results.

Unfortunately, the proof in the latter paper contains a gap. The inequality in line~9 on page~6 of \cite{LPS} is false for $f=0$. A correction seems two require considering at least two additional subcases. Rather than correcting the gap, we provide a new proof that yields the same bound for a larger class of graphs whose the minimum and the maximum degree are not too far from each other.

\begin{Theorem}
Let $G$ be a connected graph with the finite maximal degree $\Delta$ and minimal degree $\delta$. If $\delta \geq \Delta/2$  and $G\neq K_2$, then $G$ admits an asymmetric edge-coloring with three colors.
\end{Theorem}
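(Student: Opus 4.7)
\smallskip
\noindent\textbf{Proof Plan.}
The plan is to construct an explicit $3$-edge-coloring from a BFS spanning tree and then eliminate all nontrivial automorphisms layer by layer. First I would dispose of the case $\Delta\leq 2$, where $G$ is a path or a cycle and the bound $D'(G)\leq 3$ is classical (the only exception being $K_2$, which is already excluded). So assume $\Delta\geq 3$, which together with $\delta\geq\Delta/2$ forces $\delta\geq 2$; every vertex then has at least two incident edges.

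Choose a root $v_0$ of maximum degree and let $T$ be a BFS tree with levels $L_0=\{v_0\},L_1,L_2,\dots$. The edges of $G$ split into tree edges of $T$, horizontal edges (within a single $L_k$), and cross edges (between consecutive levels); by the defining property of BFS, no other non-tree edges can occur. Starting from a default coloring in which all tree edges receive colour $1$ and all non-tree edges receive colour $2$, I would then mark certain edges incident to each vertex with colour $3$ so that, together with the colour inherited from its parent edge, every vertex $v$ receives a ``fingerprint'' that distinguishes it from any other child of its parent in $T$.

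Verification is then by induction on the BFS level. The initial step is to choose the fingerprint of $v_0$ so that its local colour pattern is unique; then every colour-preserving automorphism $\sigma$ must fix $v_0$. For the inductive step, assuming $\sigma$ fixes $L_0\cup\cdots\cup L_k$ pointwise, $\sigma$ permutes the children of each level-$k$ vertex in $T$, and their fingerprints force this permutation to be the identity. Connectedness then yields $\sigma=\mathrm{id}$.

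The main obstacle is that setting an edge $uv$ to colour $3$ simultaneously modifies the fingerprints of both $u$ and $v$, so the local choices couple across the graph and cannot be made vertex-by-vertex in isolation. Managing this coupling is precisely where the hypothesis $\delta\geq\Delta/2$ enters: it guarantees that each vertex has enough incident edges (at least two beyond its parent edge in the worst case) to reserve a private subset for its own fingerprint regardless of its neighbours' demands. The tightest sub-case arises when two siblings share essentially the same non-tree neighbourhood; there the distinguishing information must be encoded in the few remaining edges, and $\delta\geq\Delta/2$ turns out to be exactly what makes such an encoding feasible.
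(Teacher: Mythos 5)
Your submission is a plan, not a proof: the step you yourself identify as ``the main obstacle'' --- that recolouring an edge $uv$ with colour $3$ changes the fingerprints of both endpoints, so the local choices couple --- is exactly the mathematical content of the theorem, and you leave it unresolved, asserting only that $\delta\geq\Delta/2$ ``turns out to be exactly what makes such an encoding feasible.'' Nothing in the proposal shows how many distinct fingerprints are available to the children of a fixed vertex, nor why that number exceeds the number of siblings that need to be separated. The paper's proof lives precisely here: it works with orbits of the colour-preserving stabilizer of the root (refined after every step, not just BFS levels), encodes fingerprints as \emph{uniform palettes} (multisets $(a,b,c)$ with $a,b,c\leq\lceil k/2\rceil$), proves counting lemmas (at least $k+1$ uniform $k$-palettes, and a partition lemma for splitting palettes among sub-orbits), and maintains the invariant $(c2)$ that every orbit touched by a coloured edge has size at most $\delta-t+1$. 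The hypothesis $\delta\geq\Delta/2$ enters through the chain $\lceil k/2\rceil\leq\lceil(\Delta-1)/2\rceil\leq\Delta/2\leq\delta$, which keeps the refined orbits small enough for the palette supply to suffice at the next step. Your ``at least two edges beyond the parent edge'' estimate carries none of this quantitative weight.

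Two further cases that your scheme cannot handle as stated. First, \emph{twins and terminal vertices}: two siblings at level $k+1$ may have identical neighbourhoods in levels $\leq k+1$ and no edges to level $k+2$ at all (the paper's ``terminal orbits'' with $k=0$), so there are no private edges left in which to encode a fingerprint; the paper resolves this by an induction on $\Delta$ (colouring the inner edges of an orbit, which induce a regular graph of degree $r<\Delta$, asymmetrically), by asymmetric colourings of complete graphs $K_{r'+1}$, and by the delicate procedures of its subsections on terminal orbits --- none of which has an analogue in your plan. Second, \emph{fixing the root}: you assert that one can choose the fingerprint of $v_0$ so that a colour-preserving automorphism must fix it, but an automorphism need not respect your BFS tree or its levels a priori; uniqueness of the root's local pattern is a global property that must be preserved through the entire colouring (the paper's condition $(c3)$: all edges at $x_0$ are red and every other vertex is guaranteed a blue or green incident edge, including in the terminal cases). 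Until these points are addressed with actual constructions and counts, the proposal does not establish the theorem.
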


We note that the graph $G$ in the theorem may be finite or infinite. As observed in \cite{LPS}, there are few small graphs satisfying the conditions of the theorem for which $D'(G) =3$. These are regular graphs $K_{n}$ for $n\leq 5$, $K_{n,n}$ for $n\leq 3$, $C_5$, and one non-regular graph $K_{2,4}$. 

\section{Preliminaries}\label{s:pre}

It is an easy exercise to check that for each complete graph $K_n$, $n\geq 3$, there exists an asymmetric $3$-coloring satisfying an additional condition that each vertex has an incident edge whose color is other than $3$. In fact, for $n\geq 6$, $K_n$ has an asymmetric $2$-coloring, easy to construct, so it remains to check  $K_n$ for $n=3,4,5$. In the proof we will refer to this fact without further comment.

We will consider the subgroups of the automorphism group $Aut(G)$ of a graph $G$. The stabilizer of a vertex $x_0$, denoted $Aut(G,x_0)$ consists of the automorphism of $G$ fixing $x_0$. For this and further subgroups $\Gamma\subseteq Aut(G)$ we consider the orbits of $\Gamma$, that is, the subsets $O\subseteq V(G)$ of those vertices of $G$ that can be moved into each other by an automorphism in $\Gamma$. (An orbit of a vertex $z\in V(G)$ is the set $\{x\in V(G): \sigma(z)=x$ for some $\sigma\in\Gamma\}$). Orbits of $Aut(G,x_0)$ has been considered in the procedure of coloring edges of a graph $G$ in \cite{LPS}. Unlike in \cite{LPS} we consider further subgroups and new orbits at each step of our procedure of coloring edges of $G$.

After coloring some edges of $G$, we will consider the subgroup of $Aut(G,x_0)$ that preserves the colors of the edges colored so far. It consists of those automorphism of $G$, fixing $x_0$, that satisfy additionally the condition that the edge $\sigma(x)\sigma(y)$ has the same color as the edge $xy$, for all colored edges $xy\in E(G)$. In any case, we will take care that the edges colored so far are mapped into colored edges under any automorphism in question. Formally, the edges colored so far will form, at any step, a union of orbitals of the currently considered subgroup of $Aut(G)$.

The subgroups of $Aut(G)$ considered at successive steps will form a descending chain of sets. Accordingly, the sets of orbits considered at successive steps will form a descending chain of partitions of $V(G)$, in the sense that consecutive partitions are finer and finer. The goal is to get the smallest partition into singletons after coloring all the edges.

In the proof below we make use of the fact that in any orbit $O$ (of any subgroup of $Aut(G)$)) all the vertices have the same number $k$ of outcoming edges.
One of our tools will be coloring these edges in three colors, red, blue, and green, so that the corresponding multisets of colors assigned to different vertices are different. The multisets of colors in question will be denoted $(a,b,c)$, where $a,b,c\geq 0$ are the numbers of colors red, blue, and green, respectively, and $a+b+c=k$. 
Such a multiset of colors will be called a \emph{uniform $k$-palette}, if it satisfies an additional condition: $a,b,c \leq \lceil k/2 \rceil$.

It is easy to observe the following:

\begin{Lemma}\label{l:k+1}
For each $k>0$, the number
of uniform $k$-palettes is at least $k+1$.  In addition, one may assume that each of them  has $a\leq k/2$. 
\end{Lemma}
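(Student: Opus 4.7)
The plan is to prove the stronger statement directly by an explicit count: I will show that even with the extra restriction $a \leq k/2$ there are already at least $k+1$ uniform $k$-palettes, which establishes both conclusions of the lemma at once. The key observation is that once $a$ is fixed with $a \leq \lfloor k/2 \rfloor$, the remaining sum $b+c = k-a$ is at least $\lceil k/2 \rceil$, so the upper bounds $b,c \leq \lceil k/2 \rceil$ cut out an interval of admissible $b$ rather than producing an empty set.

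Concretely, I would fix $a \in \{0, 1, \ldots, \lfloor k/2 \rfloor\}$ and count the pairs $(b,c)$ with $b+c = k-a$ and $0 \leq b, c \leq \lceil k/2 \rceil$. The admissible $b$ satisfy $\lfloor k/2 \rfloor - a \leq b \leq \lceil k/2 \rceil$, giving $\lceil k/2 \rceil - \lfloor k/2 \rfloor + a + 1$ values; this equals $a+1$ for even $k$ and $a+2$ for odd $k$. Summing over $a$ from $0$ to $\lfloor k/2 \rfloor$ then produces $(j+1)(j+2)/2$ palettes when $k = 2j$ and $(j+1)(j+4)/2$ palettes when $k = 2j+1$.

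Finally, I would compare these totals with the target $k+1$. For $k = 2j$ the inequality $(j+1)(j+2) \geq 4j+2$ reduces to $j^2 \geq j$, which holds for every $j \geq 1$; for $k = 2j+1$ the inequality $(j+1)(j+4) \geq 4(j+1)$ reduces to $j+4 \geq 4$, which is trivial. There is essentially no hard step; the only point that merits attention is the verification of the small cases $k = 1$ and $k = 2$, where the formulas give respectively the palettes $(0,1,0), (0,0,1)$ and $(0,1,1), (1,0,1), (1,1,0)$, attaining equality with $k+1$ and confirming that the stated bound cannot be sharpened.
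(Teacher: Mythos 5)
Your proof is correct and follows essentially the same route as the paper: in both cases one counts, separately for $k=2j$ and $k=2j+1$, the uniform palettes satisfying the extra condition $a\leq k/2$ by summing over the admissible values of $a$, obtaining the same totals $(j+1)(j+2)/2$ and $(j+1)(j+4)/2$ and comparing them with $k+1$. The only cosmetic difference is that the paper writes the palettes as an explicit parametrized list $(i,\,m-j,\,m-i+j)$ rather than deriving the interval of admissible $b$ for each fixed $a$, so there is nothing substantive to distinguish the two arguments.
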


\begin{proof}
For even $k=2m$, the additional condition is void. The $k$-palettes satisfying the conditions of the lemma are: $(i,m-j,m-i+j)$, where $i=0,\ldots,m$, $j=0,\ldots,i$. The number of such palettes is $\sum_{i=0}^{m} (i+1) = m^2/2 + 3m/2 +1 \geq 2m+1$, as required. 

For odd $k=2m+1$, the additional conditions is $a\leq m$. The $k$-palettes satisfying all conditions are: $(i,m+1-j,m-i+j)$, where $i=0,\ldots,m$, $j=0,\ldots,i+1$. The number of such palettes is $\sum_{i=0}^{m} (i+2) = m^2/2 + 5m/2 +2 \geq 2m+2$, as required. 

\end{proof}

We will also need an observation concerning partitioning palettes into smaller uniform palettes.

\begin{Lemma}\label{l:part}
Let $k=k_1+k_2\ldots+k_s$ with $s\geq 2$ and all $k_i>0$. Then every uniform $k$-palette can be partitioned into $s$ uniform palettes of cardinalities $k_1,k_2,\ldots,k_s$, respectively.
\end{Lemma}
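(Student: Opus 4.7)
The plan is to reduce to the case $s=2$ by induction on $s$. For $s \geq 3$, I would apply the inductive hypothesis to split the uniform $k$-palette into $s-1$ uniform palettes of sizes $k_1, \dots, k_{s-2}, k_{s-1}+k_s$, and then invoke the $s=2$ case on the last of these to break it into uniform palettes of sizes $k_{s-1}$ and $k_s$. This leaves only the base case to handle.

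For the base case $s=2$, write $\alpha_i = \lceil k_i/2 \rceil$. The task is to find an integer triple $(a_1,b_1,c_1)$ with $a_1+b_1+c_1 = k_1$ satisfying
\[
\max(0, x - \alpha_2) \;\leq\; x_1 \;\leq\; \min(x, \alpha_1) \qquad (x \in \{a,b,c\}),
\]
which is exactly the requirement that both $(a_1,b_1,c_1)$ and $(a-a_1,b-b_1,c-c_1)$ be uniform palettes of sizes $k_1$ and $k_2$ respectively. Each interval is nonempty because $a,b,c \leq \lceil k/2 \rceil \leq \alpha_1 + \alpha_2$. A standard greedy argument (start at the lower bounds and repeatedly increment any coordinate still strictly below its upper bound) produces such a triple provided the lower-bound sum is at most $k_1$ and the upper-bound sum is at least $k_1$.

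Both inequalities will be verified by a short case analysis on the number of entries of $(a,b,c)$ that exceed the relevant threshold. For the lower-bound sum: it is $0$ if no entry exceeds $\alpha_2$; at most $\lceil k/2 \rceil - \alpha_2 \leq \alpha_1 \leq k_1$ if exactly one does; and at most $k - 2\alpha_2$ or $k - 3\alpha_2$ if two or three do, each bounded by $k - k_2 = k_1$ because $2\alpha_2 \geq k_2$. The analogous argument for the upper-bound sum uses $\lceil k/2 \rceil \leq \alpha_1 + \alpha_2$ when exactly one entry exceeds $\alpha_1$ and $2\alpha_1 \geq k_1$ when two or three do. The main obstacle is simply ensuring this case analysis is complete; no individual subcase is difficult, but there are several of them and parity must be tracked consistently between $\alpha_1,\alpha_2$ and $\lceil k/2 \rceil$.
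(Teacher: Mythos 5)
Your proposal is correct. The top-level structure coincides with the paper's: both reduce the statement to the case $s=2$ (the paper says exactly this and then leaves the two-part case to the reader with the hint ``partition elements as uniformly as possible''). Where you differ is in how the $s=2$ case is actually established: instead of exhibiting an explicit balanced split, you encode the requirement as per-colour integer intervals $\bigl[\max(0,x-\alpha_2),\,\min(x,\alpha_1)\bigr]$ and prove feasibility by checking that the sum of lower bounds is at most $k_1$ and the sum of upper bounds is at least $k_1$, then invoke the discrete intermediate-value/greedy argument. Your case analysis is complete and each inequality checks out (interval nonemptiness from $\lceil k/2\rceil\leq\alpha_1+\alpha_2$; the lower-bound cases from $2\alpha_2\geq k_2$; the upper-bound cases from $\lceil k/2\rceil\leq\alpha_1+\alpha_2$ and $2\alpha_1\geq k_1$), and the induction step splitting off $k_{s-1}+k_s$ is sound. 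So your argument buys a fully verified version of what the paper only asserts, at the cost of a slightly longer bookkeeping argument; the paper's intended construction (splitting each colour class as evenly as possible between the two parts) is shorter to state but its correctness is exactly the kind of parity checking you carry out explicitly. One minor point: as the paper itself warns, the split need not preserve the extra condition $a\leq k/2$ from Lemma~\ref{l:k+1}; you do not claim it does, so there is no issue.
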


It is enough to observe that this can be done for $s=2$ with partitioning elements as uniformly as possible. The details are left to the reader as an exercise. (We note that this partitioning may not preserve the condition $a\leq k/2$.)

In the proof below we will need sequences of uniform $k$-palettes rather than single $k$-palettes. Let 
$k=k_1+k_2\ldots+k_s$, $s\geq 1$, by a uniform $(k_1,k_2,\ldots, k_s)$-palette we mean a sequence of uniform $k_i$-palettes with $i=1,2,\ldots,s$. Two such $(k_1,k_2,\ldots, k_s)$-palettes are different if they differ as sequences. For $s=1$, a $(k_1)$-palette is identified with the single $k_1$-palette.

\begin{Lemma}\label{l:k+1(2)}
Let $k=k_1+k_2+\ldots+k_s$, $k,s>0$ and all $k_i>0$. Then the number 
of uniform $(k_1,k_2,\ldots, k_s)$-palettes is at least $k+1$. 
\end{Lemma}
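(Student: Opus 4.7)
The plan is to reduce immediately to Lemma~\ref{l:k+1} and a simple product bound. By definition a uniform $(k_1,\ldots,k_s)$-palette is a sequence whose $i$-th entry is a uniform $k_i$-palette, so the choices in each coordinate are independent and the total count equals the product $\prod_{i=1}^s N_i$, where $N_i$ denotes the number of uniform $k_i$-palettes. By Lemma~\ref{l:k+1}, $N_i \geq k_i+1$ for every $i$, since each $k_i>0$.

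It therefore suffices to verify the inequality
\[
\prod_{i=1}^s (k_i+1) \;\geq\; 1+\sum_{i=1}^s k_i.
\]
For $s=1$ this is an equality. For $s=2$, expansion gives $(k_1+1)(k_2+1)=k_1k_2+k_1+k_2+1\geq k_1+k_2+1$, using $k_1k_2\geq 0$. The general case then follows by an immediate induction on $s$: assuming the bound holds for $s-1$ summands, write $m=k_2+\cdots+k_s$ and apply the $s=2$ case to $k_1$ and $m$, namely
\[
(k_1+1)\prod_{i=2}^s(k_i+1) \;\geq\; (k_1+1)(m+1) \;\geq\; k_1+m+1 \;=\; k+1.
\]

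There is no real obstacle here; the only thing to be a bit careful about is the hypothesis $k_i>0$, which is needed so that Lemma~\ref{l:k+1} applies to each coordinate (for $k_i=0$ there is a single empty palette, and while the bound $N_i\geq k_i+1$ still holds trivially, requiring $k_i>0$ matches the hypothesis of Lemma~\ref{l:k+1} and is the setting in which the lemma will later be used). Since the argument only uses $N_i\geq k_i+1$, no additional structural property of palettes is needed, and the whole proof fits in a few lines.
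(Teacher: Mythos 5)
Your proof is correct and takes essentially the same approach as the paper: both reduce to Lemma~\ref{l:k+1} coordinate-wise and count sequences by multiplying the per-coordinate counts. The only difference is the closing arithmetic—the paper singles out the largest $k_1$ and uses $(k_1+1)\cdot 2^{s-1} > s\cdot k_1 \geq k$, whereas you keep every factor and use the cleaner bound $\prod_{i=1}^s(k_i+1)\geq 1+\sum_{i=1}^s k_i$—which is a minor (if slightly tidier) variation, not a different argument.
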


\begin{proof}
For $s=1$, Lemma~\ref{l:k+1} applies. For $s>1$, assume that $k_1$ is the largest number among all $k_i$. Then, again by Lemma~\ref{l:k+1},
the number of $k_1$-palettes is at least $k_1+1$, and for each $i>1$ the number of $k_i$-palettes is at least $2$. This yields at least $(k_1+1)\cdot 2^{s-1} > s\cdot k_1 \geq k_1+k_2+\ldots+k_s=k$ uniform $(k_1,k_2,\ldots,k_s)$-palettes, as required. 
\end{proof}

\section{Proof}

 We describe the procedure of coloring edges of $G$ in red, blue and green so that the resulting coloring is asymmetric. First we establish notation and basic facts.

\subsection{Level-orbit structure} We make use of the level structure of $G$. We choose and fix a root vertex $x_0\in G$ with the smallest degree $\delta$. The remaining vertices are ordered in accordance to the distance from $x_0$ and the orbits of the stabilizer $Aut(G,x_0)$ of $x_0$. 
First, the vertices of $G$ are partitioned into sets consisting of the vertices that are equally distant from~$x_0$. Such sets are called \emph{levels} and numbered by the distance from~$x_0$. Note that each orbit of the stabilizer $Aut(G,x_0)$ is contained entirely in one of the levels, and each level is partitioned into orbits of $Aut(G,x_0)$. Of course, it may happen that a whole level forms a single orbit. It may also happen that some orbits are trivial (consisting of one vertex), which means that the vertex in question is fixed by $Aut(G,x_0)$.

Note that each edge joins either the two vertices at some consecutive levels $i$ and $i+1$, for some $i$, or it joins the two vertices at the same level. In the latter case the two vertices may belong to the same orbit or to two different orbits at the same level. This is pictured schematically in Figure~1 (note that the diagram does not show possible inner edges within orbits and edges between orbits).

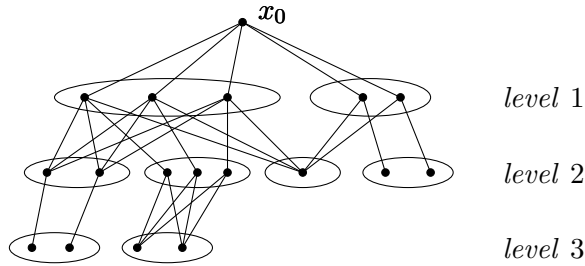
\begin{figure}\label{fig1}
\begin{tikzpicture}[auto,inner sep=1pt, 
minimum size=2pt]

  
 \draw (4,3.5) node[circle,fill=black,draw]{};   
 \draw (4.4,3.6) node {$x_0$};
  \draw (8,2.5) node {\textit{level} 1};
  
   \draw (4.4,3.6) node {$x_0$};
  \draw (8,1.5) node {\textit{level} 2};
  
   \draw (4.4,3.6) node {$x_0$};
  \draw (8,0.5) node {\textit{level} 3};
            
\draw  (3,2.5) ellipse (15mm and 2.5mm); 

\draw  (5.7,2.5) ellipse (8mm and 2.5mm); 

\draw  (1.8,1.5) ellipse (7mm and 2mm); 
\draw  (3.4,1.5) ellipse (7mm and 2mm); 
\draw  (4.8,1.5) ellipse (5mm and 2mm); 
\draw  (6.2,1.5) ellipse (6mm and 2mm); 

\draw  (1.5,0.5) ellipse (6mm and 2mm); 
\draw  (3,0.5) ellipse (6mm and 2mm);

\draw[-] (4,3.5)--(1.9,2.5);
\draw[-] (4,3.5)--(2.8,2.5);
\draw[-] (4,3.5)--(3.8,2.5);
\draw[-] (4,3.5)--(5.6,2.5);
\draw[-] (4,3.5)--(6.1,2.5);

 \draw (1.9,2.5) node[circle,fill=black,draw]{};
  \draw (2.8,2.5) node[circle,fill=black,draw]{};
   \draw (3.8,2.5) node[circle,fill=black,draw]{};
 \draw (5.6,2.5) node[circle,fill=black,draw]{};
  \draw (6.1,2.5) node[circle,fill=black,draw]{};



\draw (1.4,1.5)
node[circle,fill=black,draw]{};
\draw (2.1,1.5)
node[circle,fill=black,draw]{};

\draw (3,1.5)
node[circle,fill=black,draw]{};
\draw (3.4,1.5)
node[circle,fill=black,draw]{};
\draw (3.8,1.5)
node[circle,fill=black,draw]{};

\draw (4.8,1.5) node[circle,fill=black,draw]{};
\draw (5.9,1.5)
node[circle,fill=black,draw]{};
\draw (6.5,1.5) node[circle,fill=black,draw]{};
   
\draw[-] (1.9,2.5)--(1.4,1.5);
\draw[-] (1.9,2.5)--(2.1,1.5);
\draw[-] (1.9,2.5)--(3,1.5);
\draw[-] (1.9,2.5)--(4.8,1.5);

\draw[-] (2.8,2.5)--(1.4,1.5);
\draw[-] (2.8,2.5)--(2.1,1.5);
\draw[-] (2.8,2.5)--(3.4,1.5);
\draw[-] (2.8,2.5)--(4.8,1.5);

\draw[-] (3.8,2.5)--(1.4,1.5);
\draw[-] (3.8,2.5)--(2.1,1.5);
\draw[-] (3.8,2.5)--(3.8,1.5);
\draw[-] (3.8,2.5)--(4.8,1.5);

    
 
\draw[-] (5.6,2.5)--(4.8,1.5);
\draw[-] (6.1,2.5)--(4.8,1.5);
\draw[-] (5.6,2.5)--(5.9,1.5);
\draw[-] (6.1,2.5)--(6.5,1.5);


 

\draw (1.2,0.5)
node[circle,fill=black,draw]{};
\draw (1.7,0.5) node[circle,fill=black,draw]{};
\draw (2.6,0.5)
node[circle,fill=black,draw]{};
\draw (3.2,0.5) node[circle,fill=black,draw]{};

\draw[-] (1.4,1.5)--(1.2,0.5);
\draw[-] (2.1,1.5)--(1.7,0.5);

\draw[-] (3,1.5)--(2.6,0.5);
\draw[-] (3,1.5)--(3.2,0.5);

\draw[-] (3.4,1.5)--(2.6,0.5);
\draw[-] (3.4,1.5)--(3.2,0.5);
\draw[-] (3.8,1.5)--(2.6,0.5);
\draw[-] (3.8,1.5)--(3.2,0.5);


  \end{tikzpicture}
\caption{Level-orbit structure of $G$.  }
\end{figure}

\subsection{Initial step and outline of the procedure.} \label{s:in}
As the initial step of our procedure,  we color red all the edges outcoming from $x_0$. As we are going to take care that every other vertex will have at least one outcoming edge in blue or green, $x_0$ will be fixed by the final coloring. Thus, after this step we consider $x_0$ as \emph{fixed}. 

In further steps we consider consecutive orbits, one by one, and level by level, starting from the largest orbit $O_1$ at the first level, and color all edges incident to the considered orbit (not colored yet) so that the coloring fixes all the vertices in this orbit. The latter is to be understood that for every coloring extending the coloring at the given step the automorphism group of the final colored graph fixes necessarily all the vertices in the orbit. The orbit considered at the given step and the level containing this orbit will be referred to as \emph{current}. 

After each step, we consider orbits of the subgroup  of $Aut(G,x_0)$ preserving colors of edges colored so far.  So in each step the orbits will become smaller and smaller. The set of orbits in a given step results from partitioning orbits considered in the preceding step. Coloring edges incident with the current orbits, we will have two aims in mind: one, to fix all the vertices in the orbit, and the second, to make the resulting new orbits small enough for further proceeding.

\subsection{Notation and terminology.} \label{s:not}To describe the procedure more precisely, and to prove it works, we introduce some terminology and notation. Given a current orbit $O$, by $H$ we denote  
the subgraph induced by the vertices of the orbit. Observe that since $O$ is assumed to be an orbit (under the action of some subgroup of $Aut(G,x_0)$), $H$ is transitive and all its connected components are isomorphic regular graphs. Moreover, all the vertices in $O$ have the same degree $d$ in $G$. 
Denoting by $m\geq 1$ the number of connected components of $H$, and by $n$ the cardinality of each such component, we get that the cardinality of the current orbit is $|O|=mn$.

We distinguish three types of the edges incident to $O$. The edges of the subgraph $H$ will be called the \emph{inner} edges of the orbit $O$. The remaining edges will be called \emph{incoming} (to $O$), if they are already colored, and \emph{outcoming} (not yet colored), otherwise. Each vertex in $O$ has the same number $t$ of incoming edges, $k$ of outcoming edges, and $r$ of inner incident edges ($r$ may be viewed also as the degree of the regular graph $H$). Of course, we have $t+r+k = d$. 

We process orbits one by one, and level by level, in the sense that as the next orbit to be processed we choose always one in the current level, if there is any orbit in this level not processed yet, or we choose any orbit in the next level, otherwise. As a result, if 
$O$ is a current orbit, then all orbits in earlier levels are already processed and perhaps some orbits in the current level, too.

Processing the current orbit $O$ we color all its inner edges  (if there are any) and all outcoming edges. As a result, at each step, all the edges having endpoints in earlier levels are colored, and therefore, since $G$ is connected:
\medskip

$(*)$ \textit{for the current orbit we have always $t>0$}.

\subsection{Conditions} \label{s:con}
Let $Aut^*(G,x_0)$ denote the subgroup  of $Aut(G,x_0)$ consisting of those automorphisms that preserve colors of the orbits colored so far. We are going to color the edges of the current orbit $O$ so that after coloring the following  conditions are satisfied:

\bigskip 

$(c1)$ \textit{All the vertices of the current orbit $O$  are fixed by~$A^*(G,x_0)$.} 

\medskip

$(c2)$ \textit{Each orbit $O$ of $A^*(G,x_0)$ that has an incoming colored edge either is trivial $($of size $1)$ or its size is not larger than $\delta-t+1$}. 

\medskip

$(c3)$ \textit{The only vertex whose all incident edges are colored red is $x_0$.}

\bigskip

Note that these conditions are satisfied after the initial step (for $(c2)$, this is so, since the orbits at level~1 have $t=1$). So, considering a current orbit $O$ we may assume that after every earlier step the conditions have been satisfied. In particular, they always hold for the current orbit $O$.

We note some simple consequences of $(c2)$ for future use. Since $\delta\leq t+r+k$, and in general, $n\geq r+1$, we have
\begin{equation}\label{e:m1}
 m \leq \frac{\delta-t+1}{r+1} \leq \frac{k+r+1}{r+1}  \leq k+1.
\end{equation}

In turn, combining directly $(c2)$ with $\delta \leq t+r+k$, we get that the size of $O$ satisfies also the inequality 
\begin{equation}\label{e:size}
 |O| \leq   r+k+1.
\end{equation} 
\smallskip

\textit{Remark}. Note that condition $(c1)$ follows from $(c2)$ and the fact that all edges of $O$ are colored (since in such a case we have $t\geq\delta$). Yet, we formulate $(c1)$ separately, as this is the first explicit aim of the processing each orbit.

\subsection{Single step.} \label{s:step} We describe now a single step in our procedure.
Let $O$ be a current orbit of cardinality $|O|\geq 1$ that has uncolored (inner or outcoming) edges. 
According to the observation~$(*)$ at the end of subsection~\ref{s:not} we may assume that $t>0$. For the time being, we assume also that $k>0$. (The way we treat orbits $O$ with $k=0$ will be described later in \ref{s:ter}.)

\subsubsection{Inner edges} We start from coloring the inner edges of $O$ to make all the connected components of $H$ rigid. At this point we assume that $n>2$. We apply induction on $\Delta$ assuming that our theorem holds for all graphs with the maximum degree less than $\Delta$. In particular, it holds for regular graphs with the vertex degree $r<\Delta$. It follows that (for $n> 2$) we may color the inner edges of $O$ so that each connected component has an asymmetric coloring. Now, it remains to fix just one vertex in each component, by suitable coloring of outcoming edges, to get all the vertices in $O$ fixed.

For $n=2$, the difference is that the connected components are $K_2$ and we have no way to make them rigid by coloring inner edges. So, we color the inner edges arbitrarily. For $n=1$ there is nothing to do at this point.

\subsubsection{Outcoming edges}\label{s:out}
Now, let us choose one vertex in each component. To fix attention, denote them $x_1,\ldots,x_m$ with $m\geq 1$. (Note that, in case when $n=1$, these are all the vertices in $O$). Let $U$ denote the set of those vertices outside $O$ that are endpoints of the outcoming (uncolored) edges of $O$. This set is nonempty (by assumption that $k>0$) and is partitioned into some orbits of $Aut^*(G,x_0)$, say, $Q_1,\ldots,Q_s$ with $s\geq 1$. 

From the properties of orbits, we know that each vertex $x_j\in O$ has the same number $k$ of outcoming edges, the edges from each $x_j$ go to each of the orbits $Q_i$, and the number of the edges going from $x_j$ to $Q_i$ is the same for every $x_j$; denote it by $k_i$. We have $k=k_1+k_2+\ldots+k_s$.  Moreover, for every pair $(Q_i,x_j)$ there is an edge going from $x_j$ to $Q_i$, which means, in particular, that all $k_i>0$ (the reader should be warned, that this does not mean that every vertex $x_j$ is adjacent to every vertex of $Q_i$.). The situation is illustrated in Figure~2. (In the picture, only the edges outcoming from $x_1$ are shown. The same number of edges partitioned in the same way comes out from each vertex $x_j$.)

\begin{figure}\label{fig2}
\begin{tikzpicture}[auto,inner sep=1pt, 
minimum size=2pt]

  
 \draw (7.2,2.9) node {$O$}; 

\draw (1.5,2.5) node[circle,fill=black,draw]{};  
\draw (1.85,2.55) node {$x_1$};

\draw (2.5,2.5) node[circle,fill=black,draw]{};  
\draw (2.85,2.55) node {$x_2$};

\draw (3.5,2.5) node[circle,fill=black,draw]{};  
\draw (3.85,2.55) node {$x_3$};

\draw (5,2.5) node {\Large $\dots$};

\draw (6,2.5) node[circle,fill=black,draw]{};  
\draw (6.35,2.55) node {$x_m$};

\draw  (4,2.5) ellipse (30mm and 4mm);

\draw[-] (1.5,2.5)--(0.7,0.5);
\draw[-] (1.5,2.5)--(0.9,0.5);
\draw[-] (1.5,2.5)--(1.1,0.5);
\draw[-] (1.5,2.5)--(1.3,0.5);

\draw[-] (1.5,2.5)--(2.5,0.5);
\draw[-] (1.5,2.5)--(2.7,0.5);

\draw[-] (1.5,2.5)--(3.9,0.5);
\draw[-] (1.5,2.5)--(4.1,0.5);

\draw[-] (1.5,2.5)--(5.3,0.5);

\draw[-] (1.5,2.5)--(6.5,0.5);

\draw[-] (1.5,2.5)--(7.2,0.5);
\draw[-] (1.5,2.5)--(7.6,0.5);

\draw (0.8,1.5) node {$k_1$};
\draw (1.75,1.5) node {$k_2$};
\draw (2.45,1.5) node {$k_3$};
\draw (5.5,1.5) node {$k_s$};

\draw  (1,0.5) ellipse (7mm and 2mm); 
\draw  (2.6,0.5) ellipse (6mm and 2mm); 
\draw  (4,0.5) ellipse (5mm and 2mm); 
\draw  (5.3,0.5) ellipse (5mm and 2mm); 
\draw  (6.5,0.5) ellipse (3mm and 1.5mm); 
\draw (7.2,0.5) node[circle,fill=black,draw]{};
\draw (7.6,0.5) node[circle,fill=black,draw]{};

\draw (1,0) node {$Q_1$};
\draw (2.6,0) node {$Q_2$};
\draw (4,0) node {$Q_3$};
\draw (6,0) node {\Large $\ldots$};
\draw (7.7,0) node {$Q_s$};


  \end{tikzpicture}
\caption{Edges outcoming form $x_1$ partitioned among orbits. 
}
\end{figure}
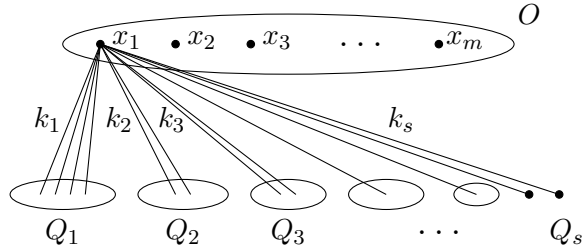

We wish to color edges outcoming from $x_1,\ldots,x_m$ so that each vertex $x_j$ gets a different uniform $(k_1,k_2,\ldots,k_s)$-palette 
$(p_1^j,p_2^j,\ldots,p_s^j)$ of colors. This is possible, since for $k_i>0$, by (\ref{e:m1}), $m \leq k+1$, and by Lemma~\ref{l:k+1(2)}, we have at least $k+1$ different uniform $(k_1,k_2,\ldots,k_s)$-palettes. 
It follows that after coloring the outcoming edges of the chosen vertices $x_1,\ldots,x_m$ as indicated above, we get all these vertices fixed.  

Moreover, for $n>2$, due to coloring of inner edges, we get all the vertices in $Q$ fixed. In this case, the edges outcoming from other vertices of $O$ are colored with the same palettes of colors as the edges outcoming from $x_1$. For $n=1$ there is nothing more to do.
For $n=2$,  we need to make use of the stronger part of inequality (\ref{e:m1}), that for $r=1$, $m \leq k/2+1 \leq k$. Then, we color the edges outcoming from the chosen vertices using $k$ different uniform $(k_1,k_2,\ldots,k_s)$-palettes, and the edges outcoming from the remaining vertices are colored using one more different uniform $(k_1,k_2,\ldots,k_s)$-palette. This guarantees that also in case $n=2$ we get all the vertices of $O$ fixed. (The reader should note that the whole procedure applies also if $|O|=1$.)

\subsubsection{Assigning palettes}\label{s:ap}
We make the above assignment of different palettes more precise with some additional conditions. First,  we  may postulate, and we do so, that $k_i$-palettes assigned to $x_1$, that is $p_1^1,p_2^1,\ldots,p_s^1$, involve no red color. Next, assuming that $k_1$ is the largest among all $k_i$, we require, in accordance with Lemma~\ref{l:k+1}, that $k_1$-palettes assigned to all $x_j$ (that is $p_1^j$) have the number of red colors $a\leq k_1/2$. This will be also the case for $k_i$-palettes with $i>1$.
For such palettes,  in accordance with the proof of Lemma~\ref{l:k+1(2)}, we assume that for each $i>1$ we fix two uniform $k_i$-palettes---one with no red color, and another one having at most one red color---and each $p_i^j$ is one of these two fixed palettes. If $k_i$ is odd, we may require that both the palettes involve no red color. In particular, for $k_i=1$, we assume that the corresponding two $1$-palettes are singletons consisting of blue or green.

\subsubsection{New orbits resulting from fixing $x_1,x_2,\ldots,x_m$.}\label{s:new}
If we assume that the vertices $x_i$ in $O$ are fixed, each orbit $Q_i$ is partitioned into smaller orbits of those vertices that have the same set of neighbors in $O$. Let $T=T(Q_i,S)$ be the subset of $Q_i$ whose neighbors in $O$ are exactly the vertices in a subset $S$ of $O$.  
Note that (in contrast with the situation in subsection~\ref{s:out}) the edges between $S$ and $T$ form a complete bipartite graph. Let $T_1,T_2,\ldots,T_w$, $w\geq 1$, denote all such nonempty sets $T$. They form a new finer partition of $U$ and we consider them as new orbits (of course, it may happen in an extreme case that they coincide with $Q_1,Q_2,\ldots,Q_s$.) 

Recall that for each vertex $x\in O$ we have assigned a uniform $k_i$-palette of colors for edges going from $x$ to $Q_i$. Using Lemma~\ref{l:part}, we partition this palette into uniform $k_j$-palettes corresponding to orbits $T_j$ contained in $Q_i$. Thus,  for each vertex $x\in O$ we have now assigned a uniform $k_j$-palette of colors for edges going from $x$ to $T_j$.  (Note that since this assignment is a detailing of the previous assignment, it keeps all the vertices in $O$ fixed.). What remains is to assign colors from the palette to particular edges, which may still lead to finer orbits. Our aim now is to show that we can do it so that the resulting orbits satisfy condition $(c2)$. 

\subsubsection{Coloring particular edges.}\label{s:coloring}
Let $T_j=T(Q_i,x_1^j,\ldots,x_u^j)$ be an orbit contained in $Q_i$ whose vertices are adjacent exactly to vertices $x_1^j,\ldots,x_u^j$ of~$O$. Let $t', r', k'$ denote, respectively, the number of edges incoming, inner, and outcoming from a vertex in $T_j$. The edges from $O$ to $T_j$ are considered as outcoming from $T_j$ (still uncolored). We will color them using assigned uniform palettes. Our aim is to show that as a result of this coloring, if $|T_j|>1$, we get orbit $T_j$ partitioned into new finer orbits such that each of them satisfies condition $(c2)$ (unless ). 

(i) First assume that $t'=0$. 
Then we start from $x_1^j$ and fix any coloring of the edges from $x_1^j$ to $T_j$ compatible with the uniform palette assigned to these edges. 
As a result, with this coloring, $T_j$ is partitioned into smaller orbits, each of them having size $s \leq \lceil k/2\rceil$ (since there are $k$ edges outcoming from $x_1^j$). As $t>0$, we have that $s \leq \lceil k/2\rceil \leq \lceil (\Delta-1)/2\rceil \leq \Delta/2 \leq \delta$,
which means that it satisfies ($c2$) (for $t'=1$ in place of~$t$). 

(ii) If $t'>0$ we apply a different strategy. 
First note that $t'>0$ means that some edge incoming to $T_j$ has been colored on some earlier step. Since we have assumed in \ref{s:con} that the conditions $(c1$--$c3)$ hold after every earlier step, in this case we may assume that $T_j$ satisfies $(c2)$ with $t=t'$. Again, we fix any coloring of the edges from $x_1^j$ to $T_j$ compatible with the uniform palette assigned to these edges, and as a result of this coloring, get a partition of $T_j$ into smaller orbits. Each of them has a size smaller than $|T_j|$, at least by one, which means that each of them satisfies $(c2)$ with $t=t'+1$, i.e., with $t$ greater by one. 

Now, in any of the above cases, we have $T_j$ partitioned into two or three smaller orbits (unless $|T_j|=1$). Then for each $x_i^j$, $i>1$, using Lemma~\ref{l:part}, we partition the uniform palette assigned to edges coming from $x_i^j$ to $T_j$ into uniform palettes corresponding to the sizes of the smaller orbits contained in $T_j$. 
Further, we repeat procedure (b) with each of the new orbits, getting still smaller orbits satisfying condition $(c2)$ and having assigned uniform palettes compatible with the first assignment in \ref{s:ap}. 

We proceed in this way with all vertices $x_i^j$ one by one, until all edges coming from $O$ to $T_j$ are colored. As a result $T_j$ is partitioned into orbits satisfying $(c2)$. 
It follows that after coloring all the edges outcoming from $O$ all the resulting orbits contained in $U$ satisfy condition $(c2)$, as required.

\subsubsection{Terminal orbits} \label{s:ter} It may happen that some of the orbits $Q_1,\ldots,Q_s$, after coloring all the edges incoming from $O$, have no outcoming edges. Then we have additional tasks: (a) to fix all the vertices in the orbit $Q_i$ with this property, and (b) to make sure that no vertex in $Q_i$ gets all incident edges colored red.

We first look at the final partition of $U$ into orbits after coloring all edges incoming to $Q_i$ from $O$. If $T'\subseteq Q_i$ is a member of this partition and it has no outcoming edges, then the number $k'$ of the edges outcoming from $T'$ equals $0$. Denote by $t'$ and $r'$ the number of incoming and inner edges of $T'$, respectively.  Then,  by $(\ref{e:size})$, $|T'| \leq r'+1$ .  It follows that the graph $H'$ induced by the vertices of $T'$ is the complete graph $K_{r'+1}$ on $r'+1$ vertices. 

If $r'>1$, then as we have remarked at the beginning of Section~\ref{s:pre}, $K_{r'+1}$ has an asymmetric 3-coloring in which each vertex has an incident edge colored blue or green. This completes both the tasks (a) and (b) for the vertices of $T'$. 

If $r'=1$ then $T'=K_2$ consists of a single edge. Coloring this edge in blue or green completes task (b), but we have no way to guarantee task (a) in this case. Therefore, we go back to step~\ref{s:coloring}, and add an additional condition when coloring edges in this special case. 

Suppose that $T'\subseteq T_j$, and let $xy$ be the only edge of $T'$. Since there are no outcoming (uncolored) edges from $T'=K_2$, it means that
none of $x$ and $y$  has outcoming uncolored edges in $T_j$ other than those going to $O$. It follows, that $T_j$ as an orbit consists of one or more copies of $K_2$, and has 
no outcoming uncolored edges other than those going to $O$.

In~\ref{s:coloring} we have colored edges going from $x_1^j$ to $T_j$ in an arbitrary way compatible with the uniform palette assigned to it. We may add one condition that two edges going to vertices joined by an inner edge in $T_j$ do not get the same color. This is possible since the palette is uniform. This guarantees that the case $r'=1$ cannot occur at this stage, as the vertices joined by an edge in $T_j$ go to different final orbits.

\subsubsection{Terminal orbits with $r'=0$} \label{s:ter0} The case when $r'=0$ in the preceding subsection is more complicated. Then $T'=K_{r'+1}$ consists of a single vertex $z$ and to make sure that there is a blue or green edge incident to $z$, we need to come back to subsection \ref{s:new}, and treat the whole orbit $Q_i$ containing $z$ in a more careful way. 

Similarly as in the previous subsection for $r'=1$ we infer that no vertex in $Q_i$ has outcoming edges, and there are no inner edges in $Q_i$. 

First assume, in addition, that $Q_i$ has incoming colored edges (before coloring the edges incoming from $O$). Then, as in case (ii) of \ref{s:coloring}, it satisfies condition~$(c2)$. By (\ref{e:size}), $|Q_i|\leq k'+1$, where $k'$ is the number of uncolored edges outcoming from a vertex of $Q_i$. All these edges have endpoints in $O$. On the other hand, each vertex $x_j\in O$ has a neighbor in $Q_i$, and the edges going from $x_j$ to $Q_i$ are to be colored with the assigned uniform $k_i$-palette. 

If $k_i=1$ then, according to~\ref{s:ap}, the corresponding edges are colored blue or green, which completes task (a) of \ref{s:ter}. For (b), note that in this case all the vertices in $Q_i$ have different sets of neighbors in $O$. Hence, all they are fixed, once the vertices of $O$ are fixed. 

Thus, we may assume, that $k_i>1$. We describe the way of assigning colors to all edges from $O$ to $Q_i$, with accordance to assigned $k_i$-palettes, so that each vertex of $Q_i$ gets an edge colored blue or green (to complete task (b) of~\ref{s:ter}).
We start from edges going from $x_1$ to $Q_i$. By the conditions assumed in subsection~\ref{s:ap}, all edges going from $x_1$ are not red, so the endpoints of these edges in $Q_i$ we may treat as \emph{settled} (from the point of view of condition (b)). The vertex $x_1\in O$ we treat now as \emph{used} (all the edges going from $x_1$ to $Q_i$ are colored). Coloring the edges of $x_1$ partition $Q_i$ into smaller orbits, each of which satisfies condition $(c2)$ for $t$ larger, at least, by one. There are two such orbits of the vertices adjacent to $x_1$, and one more consisting of the remaining vertices in $Q_i$, not settled yet. 

If the third set is nonempty, in the next step, we choose a vertex $y$ belonging to this set. It must be adjacent to some vertex $x_j$ other than $x_1$. We have a uniform $k_i$-palette assigned to $x_j$ and $k_i$ edges going from $x_j$ to $Q_i$ not colored yet. We partition this palette, in accordance with Lemma~\ref{l:part},  among the three new orbits in $Q_i$ that resulted from coloring the edges from $x_1$ to $Q_i$.  
Thus, each of these orbit has assigned a uniform palette. Because the palette assigned to $x_j$ and $O_i$ has $a\leq k_i/2$ (according to \ref{s:ap}), we can partition it so that there is a blue or green color assigned to the orbit containing $y$. So, we may put this color to the edge $yx_j$, which makes $y$ settled. Other edges from $x_j$ to $Q_i$ are colored, in accordance to the uniform palettes assigned to the three orbits, which yields a new finer partition of $Q_i$ into orbits. Because the palettes are uniform, all the resulting new orbits, with new colored edges incoming, satisfy condition $(c2)$  with $t$ increased by one. The argument is, generally, the same as in~\ref{s:new} (ii). A difference is that there may be only one edge incoming to some orbit from $x_j$, while other vertices in this orbit may not be adjacent to $x_j$, at all. Yet, this also partition the orbit in question into two new orbits, both satisfying condition~$(c2)$.

We proceed in the same way with further unsettled vertices $y\in Q_i$, partitioning uniform palettes, and obtaining a set of new finer orbits satisfying~$(c2)$. The situation is illustrated in Figure~3: the first step on the left, and a further step on the right. Note that there may be edges going from the used vertices to the unsettled that are colored red. They do not matter in the procedure.

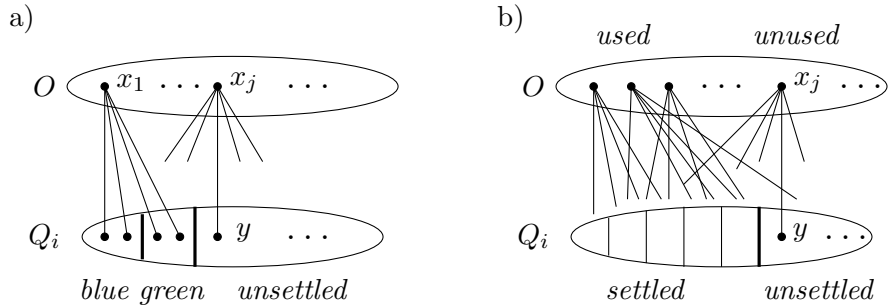
\begin{figure}\label{fig3}
\begin{tikzpicture}[auto,inner sep=1pt, 
minimum size=2pt]


 \draw (0.2,3.4) node {a) }; 
  \draw (0.5,2.5) node {$O$}; 
   \draw (0.5,0.5) node {$Q_i$}; 
 \draw  (3,2.5) ellipse (22mm and 4mm); 
  \draw  (3,0.5) ellipse (20mm and 4mm); 
 
\draw (6.7,3.4) node {b) }; 
\draw (8.2,3.2) node {\textit{used} }; 
\draw (10.5,3.2) node {\textit{unused}}; 

  \draw (7,2.5) node {$O$}; 
   \draw (7,0.5) node {$Q_i$}; 
\draw  (9.5,2.5) ellipse (22mm and 4mm); 
\draw  (9.5,0.5) ellipse (20mm and 4mm);


\draw (1.3,2.5) node[circle,fill=black,draw]{}; \draw (1.65,2.55) node {$x_1$};

\draw (2.8,2.5) node[circle,fill=black,draw]{};  \draw (3.15,2.55) node {$x_j$};
 \draw (4,2.5) node {\Large$\ldots$};
  \draw (2.3,2.5) node {\Large$\ldots$};


\draw (7.8,2.5) node[circle,fill=black,draw]{}; 
\draw (8.3,2.5) node[circle,fill=black,draw]{};
\draw (8.8,2.5) node[circle,fill=black,draw]{};

\draw (10.3,2.5) node[circle,fill=black,draw]{};  \draw (10.65,2.55) node {$x_j$};
 \draw (11.35,2.5) node {\Large$\ldots$};
 \draw (9.5,2.5) node {\Large$\ldots$};


\draw (1.3,0.5) node[circle,fill=black,draw]{};
\draw (1.6,0.5) node[circle,fill=black,draw]{};
\draw (2,0.5) node[circle,fill=black,draw]{};
\draw (2.3,0.5) node[circle,fill=black,draw]{};
\draw (2.8,0.5)
node[circle,fill=black,draw]{};
 \draw (3.15,0.55) node {$y$};
 \draw (4,0.5) node {\Large$\ldots$};

\draw[-,very thick] (1.8,0.8)--(1.8,0.2);
\draw[-,very thick] (2.5,0.9)--(2.5,0.1);

\draw[-] (1.3,2.5)--(1.3,0.5);
\draw[-] (1.3,2.5)--(1.6,0.5);
\draw[-] (1.3,2.5)--(2,0.5);
\draw[-] (1.3,2.5)--(2.3,0.5);

\draw[-] (2.8,2.5)--(2.8,0.5);

\draw[-] (2.8,2.5)--(2.1,1.5);
\draw[-] (2.8,2.5)--(2.4,1.5);
\draw[-] (2.8,2.5)--(3.1,1.5);
\draw[-] (2.8,2.5)--(3.4,1.5);


\draw (10.3,0.5)
node[circle,fill=black,draw]{};
 \draw (10.55,0.55) node {$y$};
 \draw (11.15,0.5) node {\Large$\ldots$};

\draw[-] (8,0.75)--(8,0.25);
\draw[-] (8.5,0.85)--(8.5,0.15);
\draw[-] (9,0.9)--(9,0.1);
\draw[-] (9.5,0.9)--(9.5,0.1);
\draw[-,very thick] (10,0.9)--(10,0.1);

\draw[-] (7.8,2.5)--(7.8,0.8);
\draw[-] (7.8,2.5)--(8.1,0.9);
\draw[-] (7.8,2.5)--(8.4,1);
\draw[-] (7.8,2.5)--(8.7,1);


\draw[-] (8.3,2.5)--(8.25,1.);
\draw[-] (8.3,2.5)--(9.1,1);
\draw[-] (8.3,2.5)--(9.4,1);
\draw[-] (8.3,2.5)--(9.7,1);
\draw[-] (8.3,2.5)--(10.5,1);

\draw[-] (8.8,2.5)--(8.5,1.);
\draw[-] (8.8,2.5)--(8.8,1);
\draw[-] (8.8,2.5)--(9.3,1);
\draw[-] (8.8,2.5)--(9.8,1);

\draw[-] (10.3,2.5)--(10.3,0.5);

\draw[-] (10.3,2.5)--(9,1.2);
\draw[-] (10.3,2.5)--(9.7,1.4);
\draw[-] (10.3,2.5)--(10,1.5);
\draw[-] (10.3,2.5)--(10.6,1.5);

\draw (10.8,-0.2) node {\textit{unsettled}};
\draw (8.5,-0.2) node {\textit{settled}};
\draw (1.3,-0.2) node {\textit{blue}};
\draw (3.8,-0.2) node {\textit{unsettled}};

\draw (2.2,-0.28) node {\textit{green}};

  \end{tikzpicture}
\caption{Procedure of coloring edges in the case of terminal orbits $Q_i$ without incoming edges.
}
\end{figure}

We do it
until all the vertices in $Q_i$ are settled.  Since $|Q_i|\leq k'+1$ and there are $k'$ edges coming to $O$ from each vertex of $Q_i$, there are enough vertices in $O$ to settle all the vertices in $Q_i$ (recall that at the beginning, using $x_1$, we have settled at least two vertices simultaneously).  For the remaining vertices $x_j\in O$ not used in this procedure (if there are any), we color the edges outcoming from them to $Q_i$, according to assigned uniform palettes, partitioned among orbits in accordance with Lemma~\ref{l:part}, so that all the resulting orbits always satisfy $(c2)$. Since for each orbit at the end we have $k=r=0$, it follows by (\ref{e:size}), that each such orbit consists of a single fixed vertex, which makes task (a) completed, as well.
\smallskip

It remains to consider the case when $Q_i$ has no incoming colored edges. Then $k'\geq \delta$. On the other hand, by $(c2)$ and $t>0$, we have $|O|\leq \delta$. It follows that in this case $|O|=\delta$ and the edges between $Q_i$ and $O$ form a complete bipartite graph. If so, then the edges going to $Q_i$ from $x_1$ alone, according to assumption in subsection~\ref{s:ap}, make that each vertex in $Q_i$ has an incident edge in blue or green, completing task (b). Task (a) is completed by coloring the edges in the same way as in~\ref{s:coloring}. As before, the fact that condition $(c2)$ is satisfied means, in case $k=r=0$, that all the vertices in $Q_i$ are fixed.

\subsubsection{Terminal orbits at level~1} \label{s:ter1} It may happen that a terminal orbit $O$ occurs at level~1, which is a case not covered by the previous subsection.  Then, the degree of vertices in $O$ is equal to $d=1+r \geq \delta$, and it follows that $H=K_\delta$ is the complete graph on $\delta$ vertices and there is no other orbit at level~1. Since $G$ is connected, it follows that $G=K_{\delta+1}$, and we know that the theorem holds in this case.

\subsection{Proof of the theorem}
We show that using the procedure described above we get an asymmetric 3-coloring of $G$. 

First, observe that since at each step a new orbit is processed with a least one new vertex, and this is done level by level, we make sure that each vertex of $G$ is processed within some orbit at some step.

Next, we show that, during the procedure, for each orbit $O$ after processing it as described in~\ref{s:step}, the conditions $(c1$--$c3)$ are satisfied. Indeed, as observed in~\ref{s:con}, the conditions are satisfied after initial step of coloring edges incident to $x_0$ in red. At each step, by \ref{s:not}$(*)$, the next orbit to be processed  has $t>0$. If in addition, $k>0$, then the conditions $(c1$--$c3)$ are satisfied by what established in~\ref{s:step}. If $k=0$, then $O$ is a terminal orbit. If it occurs at level~1, then $G$ has an asymmetric 3-coloring by~\ref{s:ter1}. Otherwise, $O$
has been processed at some earlier step as described in~\ref{s:ter} and~\ref{s:ter0}, and there is no need of further processing. According to this, also in this case the conditions $(c1)$ and $(c3)$ are made sure to be satisfied, while $(c2)$ is satisfied trivially (due to earlier steps). 

Thus, when the procedure is completed, all the edges are colored, and all the vertices are fixed by the subgroup of $A(G,x_0)$ preserving colors of the edges. Since by $(c3)$ no vertex has the same coloring of incident edges as $x_0$, it follows that all the vertices are fixed by the automorphism group of the resulting edge-colored graph, which completes the proof.

\subsection{Conclusion} Taking into account conjectures discussed in \cite{LPS} and \cite{IKP}, examples of small graphs with $D'(G)=3$ given after Theorem~1 seem to be exceptional. The authors of \cite{LPS} conjecture that, actually, for all the remaining regular graphs $D'(G)=2$. We believe that the situation is similar in the case of graphs satisfying the conditions of our theorem. Yet, proving such a result seem to require completely different tools.

\end{document}